\DeclareMathOperator{\pa}{PAut}
\DeclareMathOperator{\id}{id}
\DeclareMathOperator{\dom}{dom}
\DeclareMathOperator{\ran}{ran}
\DeclareMathOperator{\rank}{rank}
\DeclareMathOperator{\rk}{rk}
\newcommand{\pwr}{\,{\wr_p}\,}
\newcommand{\set}[1]{\left\{#1\right\}}
\newcommand{\ra}{\rightarrow}
\newcommand{\isd}{\mathcal{I}_2}
\newcommand{\is}[1]{\mathcal{I}_{#1}}
\newcommand{\abs}[1]{\left\vert#1\right\vert}
\newcommand{\pn}[1]{\mathcal{P}_{#1}}
\begin{document}

\title[Spectral properties of partial automorphisms]{Spectral properties of partial automorphisms of  binary rooted tree}
\author{Eugenia Kochubinska}

\address{Taras Shevchenko National University of Kyiv, Faculty of Mechanics and Mathematics, Volodymyrska str. 64, 01601, Kyiv, Ukraine.}

\theoremstyle{plain}
\newtheorem{theorem}{Theorem}
\newtheorem{lemma}{Lemma}
\newtheorem{proposition}{Proposition}
\newtheorem{corollary}{Corollary}
\newtheorem{definition}{Definition}
\theoremstyle{definition}
\newtheorem{example}{Example}
\newtheorem{remark}{Remark}
\begin{abstract}
	We study asymptotics of the spectral measure of a randomly chosen partial automorphism of a rooted tree.  To every partial automorphism $x$ we assign its action matrix $A_x$.
	It is shown that the uniform distribution  on eigenvalues of $A_x$ converges weakly in probability to $\delta_0$ as $n \to \infty$, where $\delta_0$ is the delta measure concentrated at $0$.
\end{abstract}
	\subjclass[2010]{20M18, 20M20,05C05}

\keywords{partial automorphism, semigroup, eigenvalues, random matrix, delta-measure}
\maketitle

\section*{Introduction}
We consider semigroup of partial automorphisms of a binary $n$-level rooted tree.  Throughout the paper by a partial automorphism we mean root-preserving  injective tree homomorphism defined on a connected subtree. This semigroup was studied, in particular, in \cite{comb, edm}

We are interested in spectral properties of this  semigroup. The similar question for an  automorphism group was studied in \cite{Evans}.  Evans assigned equal probabilities to the eigenvalues of a randomly chosen  automorphism  of a regular rooted tree, and considered the random measure $\Theta_n$ on the unit circle $C$.
 He has shown that $\Theta_n$   converges weakly in probability to $\lambda$ as $n\to\infty$, where $\lambda$ is the normalized Lebesgue measure on the unit circle. 
 
Let $B_n=\set{v_i^n\mid i=1, \ldots, 2^n}$ be the set of vertices of the $n$th level of the $n$-level binary rooted tree.  To a randomly chosen partial automorphism $x$, we assign the action matrix   $A_x = \left(\mathbf{1}_{\set{x (v_i^n) = v_j^n }}\right)_{i,j=1}^{2^n}.$ 
 Let $$\Xi_n = \frac{1}{2^n} \sum\limits_{k=1}^{2^n} \delta_{\lambda_k}$$ be the uniform distribution on eigenvalues of $A_x$. We show that $\Xi_n$ converges weakly in probability to $\delta_0$ as $n \to \infty$, where $\delta_0$ is the delta measure concentrated at $0$.
 
 The remaining of the paper is organized as follows. Section 2 contains basic facts on partial wreath product of semigroup and its connection with a semigroup of partial automorphisms of a regular rooted tree. The main result is stated and proved in Section 3.

\section{Preliminaries}\label{sec:basic}
For a set $X=\set{1, 2}$ consider the set $\is{2}$ of all partial bijections. List all of them using standard tableax  representation: $$\set{\left(\begin{matrix} 1 &2\\1&2 \end{matrix}\right), \left(\begin{matrix} 1 &2\\2&1 \end{matrix}\right), \left(\begin{matrix} 1 &2\\1&\varnothing \end{matrix}\right), \left(\begin{matrix} 1 &2\\\varnothing&2 \end{matrix}\right), \left(\begin{matrix} 1 &2\\2&\varnothing \end{matrix}\right), \left(\begin{matrix} 1 &2\\\varnothing&1 \end{matrix}\right), \left(\begin{matrix} 1 &2\\\varnothing&\varnothing \end{matrix}\right)}.$$   This set forms an inverse semigroup under natural composition law, namely, $f \circ g:
\dom(f) \cap f^{-1} \dom(g) \ni x \mapsto g(f(x))$ for $f, \; g \in
\mathcal{I}_2$. Obviously,  $\is{2}$ is a particular case of the well-known inverse symmetric semigroup. Detailed description of it can be found in \cite[Chapter 2]{GM}.

Recall the definition of a partial wreath product of semigroups.  Let $S$ be an arbitrary semigroup. For functions $f\colon \dom(f) \ra S$, $g\colon \dom(g)\ra S$ define 
 the product $fg$  as:
\begin{equation*}
\dom(fg)=\dom(f)\cap\dom(g),\  (fg)(x)=f(x)g(x) \text{\ for all } x\in
\dom(fg).
\end{equation*}
For $a\in \isd, f\colon \dom(f)\ra S$, define $f^a$ as:
\begin{equation*}
\begin{gathered}
(f^a)(x)=f(x^a),\ \dom(f^a)=\{x \in \dom(a); x^a\in \dom(f)\}.
\end{gathered}
\end{equation*}

\begin{definition}
The partial wreath square of semigroup $\isd$ is  the
set $$\set{(f,a)\mid a \in \isd, f\colon  \dom(a)\ra \isd }$$ with
composition defined by
$$(f,a)\cdot (g,b)=(fg^a,ab)$$
Denote it by $\isd \pwr \isd$.
\end{definition}

The partial wreath square of $\is{2}$  is a semigroup, moreover, it
is an inverse semigroup \cite[Lemmas 2.22 and 4.6]{Meldrum}. We
may recursively define any  partial wreath power of the finite
inverse symmetric semigroup.  Denote by $\pn{n}$ the $n$th partial wreath power of $\is{2}$.

\begin{definition}
The partial wreath $n$-th power of semigroup $\isd$ is defined as
a semigroup $$\pn{n}=\big(\pn{n-1} \big) \pwr \isd =\set{(f,a)\mid a \in \isd, \ f\colon \dom(a)\ra \pn{n-1}}$$ with composition
defined by
$$(f,a) \cdot (g,b)=(fg^a,ab),$$ where  $\pn{n-1}$ is the
partial wreath $(n-1)$-th power of semigroup $\isd$
\end{definition}

\begin{proposition}\label{proposition:card}
Let $N_n$ be the number of elements in the semigroup $\pn{n}$. Then $N_n= 2^{2^{n+1}-1}-1$
\end{proposition}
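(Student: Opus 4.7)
The plan is to set up a recurrence for $N_n$ based on the structure of $\mathcal{P}_n = \mathcal{P}_{n-1} \pwr \isd$ and verify the closed form by induction on $n$.

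First I would count the elements of $\isd$ sorted by the size of their domain. Reading off the seven tableaux displayed at the start of Section~\ref{sec:basic}, there are exactly two elements with $|\dom(a)| = 2$ (identity and transposition), four with $|\dom(a)| = 1$, and one with $\dom(a) = \varnothing$. Since an element of $\mathcal{P}_n$ is a pair $(f,a)$ with $a \in \isd$ and $f\colon \dom(a) \to \mathcal{P}_{n-1}$ arbitrary, the number of admissible $f$ for a fixed $a$ is $N_{n-1}^{|\dom(a)|}$ (with the convention $N_{n-1}^0 = 1$). Summing over $a$ gives the recurrence
\begin{equation*}
N_n = 2 N_{n-1}^2 + 4 N_{n-1} + 1.
\end{equation*}

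Next I would verify the base case. Interpreting $\mathcal{P}_1 = \isd$, one has $N_1 = 7 = 2^{2^{2}-1} - 1$, matching the claimed formula.

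For the inductive step, assume $N_{n-1} = 2^{2^n - 1} - 1$ and write $M = N_{n-1}$ for brevity. Substituting into the recurrence,
\begin{equation*}
N_n = 2M^2 + 4M + 1 = 2(M+1)^2 - 1 = 2 \cdot \bigl(2^{2^n - 1}\bigr)^2 - 1 = 2^{2^{n+1} - 1} - 1,
\end{equation*}
which completes the induction. There is no real obstacle here: the only delicate point is making sure the three domain-sizes of $\isd$ are correctly enumerated, after which the algebraic identity $2M^2 + 4M + 1 = 2(M+1)^2 - 1$ collapses the recurrence onto the desired closed form.
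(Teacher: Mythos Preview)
Your proof is correct and follows essentially the same route as the paper: both count elements of $\isd$ by domain size to obtain $N_n = 2N_{n-1}^2 + 4N_{n-1} + 1$ and then verify the closed form by induction. Your use of the factorisation $2M^2 + 4M + 1 = 2(M+1)^2 - 1$ makes the inductive step slightly cleaner than the paper's direct expansion, but the underlying argument is identical.
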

\begin{proof}
We proceed by induction.

If $n=1$, then $2^{2^2-1}-1=7$. This is exactly the number of elements in $\is{2}$.

Assume that $N_{n-1}=2^{2^{n}-1}-1$. Then \begin{gather*}
N_n=\abs{\set{(f,a)\mid a\in \is{2}, f\colon \dom(a)\ra N_{n-1}}}\\=\sum_{a\in \is{2}}N_{n-1}^{\abs{\dom(a)}}=\sum_{a\in \is{2}}\left(2^{2^n-1}-1\right)^{\abs{\dom(a)}}\\=1+4\cdot(2^{2^n-1}-1)+2\cdot(2^{2^n-1}-1)^2\\=1+4\cdot 2^{2^n-1}-4+2\cdot 2^{2^{n+1}-2}-4\cdot 2^{2^n-1}+2=2^{2^{n+1}}-1.\qedhere
\end{gather*}
\end{proof}

\begin{remark}
Let $T$ be an $n$-level binary rooted tree. We define a partial automorphism of a tree $T$ as an isomorphism $x: \Gamma_1\to \Gamma_2$ of  subtrees $\Gamma_1$ and $\Gamma_2$ of $T$ containing root. Denote $\dom(x):=\Gamma_1$, $\ran (x):= \Gamma_2$ domain and image of $x$ respectively. Let $\pa T$ be the set of all partial automorphisms of $T$. Obviously, $\pa T$ forms a semigroup under natural composition law.   It was proved in \cite[Theorem 1]{comb} that the  partial wreath power $\pn{n}$ is isomorphic to $\pa T$. 
\end{remark}
\section{Asymptotic behaviour of a spectral measure of a binary rooted tree}

 We identify $x\in \pn{n}$ with a partial automorphism from $\pa T$.
  Recall, that  $B_n$ denotes the set of vertices of the $n$th level of $T$. Clearly, ${\abs{B_n}=2^n}$. 
Let us enumerate the vertices of $B_n$ by positive integers from  1 to $2^n$: $$B_n=\set{v_i^n\mid i=1, \ldots, 2^n}.$$ To a randomly chosen transformation $x \in \mathcal{P}_n$, we assign the matrix   $$A_x = \left(\mathbf{1}_{\set{x (v_i^n) = v_j^n }}\right)_{i,j=1}^{2^n}.$$ In other words, $(i, j)$th entry of $A_x$ is equal to~1, if a transformation $x$\ maps  $v_i^n$ to $v_j^n$, and~0, otherwise.
\begin{remark}
In an automorphism group of a tree such a matrix describes completely the action of an automorphism. Unfortunately, for a semigroup this is not the case.
\end{remark} 
\begin{example}\label{example:matrix} Consider the partial automorphism  $x \in \mathcal{P}_2$, which  acts in the following way

	\begin{xy}
	0;<2.5cm,0cm>:<0cm,2.5cm>::
	(2.2,2)*{v_1^0}="v0";
	(1.2,1)*{v_1^1}="v11" **@{-}; ?(.23)*{}="l";
	"v0"; (3.2,1)*{v_2^1}="v12" **@{-}; ?(.23)*{}="v";
	"v11"; (0.5,0)*{v_1^2} **@{-}; 
	"v11"; (1.9,0)*{v_2^2} **@{-}; 
	"v12"; (2.5,0)*{v_3^2} **@{.}; 
	"v12"; (3.9,0)*{v_4^2} **@{.}; 
	"l"; "v" **@{-} ?>* @{>};  ?<* @{<}
	\end{xy}
	
(dotted lines mean that these edges are not in domain of $x$).
	
Then the corresponding matrix for $x$ is
	$$A_{x}=\begin{pmatrix}
	0&0&1&0\\
	0&0&0&1\\
    0&0&0&0\\
    0&0&0&0
	\end{pmatrix}.$$
Note that if $v_2^1$ were not in the $\dom(x)$ with action on other vertices preserved, then the corresponding matrix would be the same.
\end{example}

Let $\chi_x(\lambda)$ be the characteristic polynomial of $A_x$ and $\lambda_1, \ldots, \lambda_{2^n}$ be its roots respecting multiplicity. Denote $$\Xi_n = \frac{1}{2^n} \sum\limits_{k=1}^{2^n} \delta_{\lambda_k}$$ the uniform distribution on eigenvalues of $A_x$.

\begin{theorem}\label{theorem:main}
For any function $f \in C(D)$, where $D= \{z \in \mathbb{C}\mid |z|\leq 1\}$ is a unit disc, \begin{equation}\label{eq:main}
\int_D f(x)\,\Xi_n (dx) \overset{\mathbb P}{\longrightarrow} f(0),\ \ \ n\rightarrow \infty.\end{equation}
In other words, $\Xi_n$ converges weakly in probability to $\delta_0$ as $n \to \infty$, where $\delta_0$ is the delta-measure concentrated at $0$.\end{theorem}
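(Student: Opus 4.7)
The plan is to use that $x|_{B_n}$ is a partial injection of $B_n$, so $A_x$ is a subpermutation matrix (each row and column has at most one $1$). First I would note that the functional digraph of $x|_{B_n}$ decomposes into disjoint directed cycles and chains, and reading off the characteristic polynomial from this decomposition yields
\[
\chi_x(\lambda)=\lambda^{\,2^n-L_n}\prod_{j}\bigl(\lambda^{\ell_j}-1\bigr),
\]
where $\ell_1,\ell_2,\ldots$ are the cycle lengths on level $n$ and $L_n:=\sum_j\ell_j$ is the number of vertices of $B_n$ lying in some cycle of $x$. Consequently every nonzero eigenvalue of $A_x$ is a root of unity, and
\[
\Xi_n=\Bigl(1-\frac{L_n}{2^n}\Bigr)\,\delta_0+\nu_n,
\]
where $\nu_n$ is a positive measure on the unit circle of total mass $L_n/2^n$. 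This gives the uniform estimate $\bigl|\int_D f\,d\Xi_n-f(0)\bigr|\le(|f(0)|+\|f\|_\infty)L_n/2^n$, so \eqref{eq:main} reduces to $L_n/2^n\xrightarrow{\mathbb{P}}0$; since $L_n\ge 0$, Markov's inequality further reduces it to $\mathbb{E}[L_n]=o(2^n)$.

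For this expectation bound I would exploit the monotone bound $L_n\le|\dom(x^k)\cap B_n|$, valid for every $k\ge 1$ because every cyclic vertex lies in all iterated domains. Combined with the wreath decomposition $\pn n\cong\pn{n-1}\pwr\isd$, a case analysis on the top bijection $a\in\isd$ of $x=(f,a)$ expresses $|\dom(x^k)\cap B_n|$ as a sum over the two subtrees of iterated domain sizes of $f(1),f(2)$ when $a$ is a partial idempotent, and of iterated compositions of $f(1),f(2)$ when $a$ is the swap. Taking $k=1$, with the marginal $\mathbb{P}[a]=N_{n-1}^{|\dom(a)|}/N_n$, one obtains the linear recurrence $\hat D_n=(q_n/2)\hat D_{n-1}$ for $\hat D_n:=\mathbb{E}[|\dom(x)\cap B_n|]/2^n$ with $q_n:=\mathbb{E}[|\dom(a)|]$; since $q_n\to 2$ strictly from below and $1-q_n/2$ decays doubly exponentially in $n$, $\hat D_n\to\hat D_\infty\in(0,1)$. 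For $k\ge 2$, I would use vertex-transitivity of $\aut T$ on $B_{n-1}$ together with independence of $f(1),f(2)$ to reduce the expected domain size of iterated compositions to factorized quantities and conclude $\limsup_n\mathbb{E}[|\dom(x^k)\cap B_n|]/2^n\le\hat D_\infty^{\,k}\to 0$ as $k\to\infty$.

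\emph{The hard part} will be the swap case: the composition $g=f(1)f(2)$ of two independent uniform elements of $\pn{n-1}$ is not uniformly distributed on $\pn{n-1}$, and iterated products $g^m$ have correspondingly complicated joint distributions. My intended workaround is to bypass the full marginal of $g$ and track only expected domain sizes, computing for instance $\mathbb{P}[w\in\dom(g)\cap B_{n-1}]=\mathbb{P}[w\in\dom(f_2)]\cdot\mathbb{P}[f_2(w)\in\dom(f_1)]=\hat D_{n-1}^{\,2}$ directly from independence and vertex-transitivity, then peeling off one factor at a time for higher powers. If this independence-style reduction resists a clean implementation, a fallback is to prove first (by induction via the same wreath decomposition) that every cycle length on level $n$ is a power of $2$ at most $2^n$, so that $L_n=\operatorname{tr}(A_x^{2^n})=\sum_{j=0}^n 2^j c_{2^j}(x)$, and then bound the moments $\mathbb{E}[\operatorname{tr}(A_x^{2^j})]$ recursively to obtain polynomial growth in $n$, yielding $\mathbb{E}[L_n]=O(\operatorname{poly}(n))=o(2^n)$.
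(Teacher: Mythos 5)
Your first paragraph is sound and agrees with the paper's Lemma~\ref{lemma:ult-rank}: for a subpermutation matrix the nonzero spectrum comes exactly from the cyclic vertices, your $L_n$ coincides with the ultimate rank $\rk_n(x)=\bigl|\bigcap_{m\ge1}\dom(x^m)\cap B_n\bigr|$, and the reduction via Markov's inequality to $\mathbb{E}[L_n]=o(2^n)$ is exactly how the paper finishes. The gap is that this expectation bound --- the entire quantitative content of the theorem --- is never actually established. Route A hinges on $\limsup_n\mathbb{E}\bigl[|\dom(x^k)\cap B_n|\bigr]/2^n\le\hat D_\infty^{\,k}$, but the events $\{v\in\dom(x)\}$ and $\{x(v)\in\dom(x)\}$ are positively correlated (already for $n=1$ one computes $\mathbb{P}[1\in\dom(x^2)]=3/7>(4/7)^2$), so the factorization cannot come from independence of anything in sight, and you give no mechanism that actually produces decay in $k$; you yourself flag the swap case, where one must control iterated powers of a product $f(1)f(2)$ of independent uniform elements whose law is no longer uniform, as unresolved. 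Route B asserts that $\mathbb{E}[\operatorname{tr}(A_x^{2^j})]$ can be bounded ``recursively to obtain polynomial growth'' with no recursion written down; note that the paper's own argument only yields $\mathbb{E}[L_n]=O((3/2)^n)$, so polynomial growth is a substantially stronger claim that would itself require proof. As it stands, neither route closes the argument.

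For comparison, the paper avoids iterating $x$ altogether and instead runs a single recursion in $n$ for the total ultimate rank $R_n=\sum_{x\in\pn{n}}\rk_n(x)$. Splitting over the top symbol $a\in\is{2}$ of $x=(f,a)$, the only delicate case is the swap $a=(12)$, where $\rk_n(x)=2\rk_{n-1}(f(1)f(2))$; the key trick is to factor $f_1=e\sigma$ with $e$ an idempotent and $\sigma$ a permutation, re-index the inner sum by $f_2\mapsto\sigma f_2$, and bound $\rk_{n-1}(e f_2)\le|\dom(f_1)\cap S_{n-1}(f_2)|$, which decouples the double sum into (total rank)\,$\times$\,(total ultimate rank) and is computable because $R_n'$ has the closed form of Lemma~\ref{lemma:sum-rank-again}. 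This yields $R_n\le 3R_{n-1}(1+N_{n-1})$ instead of the trivial $4R_{n-1}(1+N_{n-1})$, hence $p_n\le\frac34\,p_{n-1}$ for $p_n=R_n/(2^nN_n)$ and geometric decay. Some version of this decoupling estimate (or another genuinely new bound playing its role) is what your proposal is missing.
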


\begin{remark}
Evans \cite{Evans} has studied asymptotic behaviour of a spectral measure of a randomly chosen element $\sigma$ of $n$-fold wreath product of symmetric group $\mathcal{S}_d$. 

He considered the random measure $\Theta_n$ on the unit circle $C$, assigning equal probabilities to the eigenvalues of $\sigma$.

Evans has shown that if $f$ is a trigonometric polynomial, then $$\lim_{n\to \infty}\mathbb{P}\left\{\int_C f(x)\, \Theta_n(dx)\neq \int f(x)\, \lambda(dx)\right\}=0,$$ 
where $\lambda$ is the normalized Lebesgue measure on the unit circle. 
Consequently, $\Theta_n$   converges weakly in probability
to $\lambda$ as $n\to\infty$.
\end{remark}

In fact, Theorem~\ref{theorem:main} speaks about the number of non-zero roots of characteristic polynomial~$\chi_x(\lambda)$. Let us  find an alternative description for them.
Denote 
$$
S_n(x) =\bigcap_{m\geq 1} \dom(x^m)=\set{v^n_j \mid v^n_j\in \dom(x^m)\text{ for all }m\ge 1}
$$
the vertices of the $n$th level, which ``survive'' under the action of $x$, and define the \textit{ultimate rank} of $x$ by $\rk_n(x) = \abs{S_n(x)}$. 
Let $R_n$ denote the total number of these vertices over all $x \in \mathcal{P}_n$,  that is $$R_n = \sum\limits_{x \in \pn{n}}^{} \rk_n(x).$$ 
We call the number  $R_n$ the \emph{total ultimate rank}.

\begin{lemma}\label{lemma:ult-rank}
For $x\in \pn{n}$ the number of non-zero roots of $\chi_x$ with regard for multiplicity is equal to the ultimate rank $\rk_n(x)$ of $x$.
\end{lemma}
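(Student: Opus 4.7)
The plan is to analyze the restriction of $x$ to the set $B_n$ of leaves. Since $x$ is a partial tree automorphism, its action on $B_n$ is a partial injection $B_n \dashrightarrow B_n$; any partial injection of a finite set decomposes uniquely into a disjoint union of cycles (vertices $u_1, \dots, u_k$ with $x(u_i) = u_{i+1 \bmod k}$) and chains (vertices $w_1, \dots, w_k$ with $x(w_j) = w_{j+1}$ for $j<k$, where $w_1 \notin \ran(x)\cap B_n$ and $w_k \notin \dom(x)\cap B_n$), counting isolated vertices as chains of length one. My goal is to relate this combinatorial decomposition to both the characteristic polynomial $\chi_x$ and the set $S_n(x)$.

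First I would reorder the leaves so that vertices belonging to the same cycle or chain are grouped consecutively. Because cycles and chains are $x$-invariant in the sense that $x$ maps each into itself, the reordered $A_x$ becomes block diagonal. A cycle of length $k$ contributes a $k\times k$ cyclic permutation block whose characteristic polynomial is $\lambda^k - 1$, while a chain of length $k$ contributes a $k\times k$ matrix with $1$'s on the superdiagonal (and zeros elsewhere), which is nilpotent with characteristic polynomial $\lambda^k$. Multiplying blockwise yields
\[
\chi_x(\lambda) = \lambda^{N}\prod_i (\lambda^{k_i}-1),
\]
where $N$ is the total number of vertices lying in chains and $k_1, k_2, \dots$ are the cycle lengths. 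Consequently, the number of non-zero roots of $\chi_x$, counted with multiplicity, equals $\sum_i k_i$, i.e.\ the total number of vertices lying in cycles.

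Next I would identify $S_n(x)$ with exactly this set of cycle vertices. If $v$ lies in a cycle, clearly $x^m(v)$ is defined for every $m\geq 1$, so $v \in S_n(x)$. Conversely, suppose $v \in S_n(x)$. The forward orbit $v, x(v), x^2(v), \dots$ lies in the finite set $B_n$ and must repeat, say $x^p(v) = x^q(v)$ with $p<q$. Since $x$ is a partial injection, successively applying the inverse partial map $p$ times (which is legal because both elements lie in $\ran(x^p)$) gives $v = x^{q-p}(v)$, so $v$ is periodic and hence lies in a cycle. Thus $\rk_n(x) = |S_n(x)|$ equals the number of cycle vertices, which matches the count of non-zero roots computed in the previous step.

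The mathematical content is routine once the decomposition is in place; the only mild subtlety is the injectivity argument showing that a partial injection on a finite set cannot have ``pre-periodic'' points feeding into cycles (so that the only way for a full forward orbit to exist is to be trapped in a cycle). This is exactly what ensures that $S_n(x)$ is not merely the set of eventually-periodic points but really the set of cycle vertices, and it is the key structural fact that couples the combinatorics to the spectral count.
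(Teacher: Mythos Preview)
Your proof is correct and follows essentially the same approach as the paper: both arguments rest on the dichotomy that under iteration of the partial bijection each leaf either lies on a cycle (contributing a root of unity to $\chi_x$ and belonging to $S_n(x)$) or on a chain (contributing only to the zero eigenvalue and eventually leaving $\dom(x^m)$). You make the cycle--chain decomposition and the resulting block-diagonal form of $A_x$ explicit, whereas the paper phrases the same reasoning vector-by-vector, but the underlying idea and the key injectivity observation (no pre-periodic points) are identical.
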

\begin{proof}
Let $x\in \pn{n}$ and $A_x$ be its action matrix. Consider $A_x$ as a matrix in a standard basis. Let $w$ be some basis vector. It follows from the definition of $A_x$ that there are two possibilities: if the vertex $v$ corresponding to $w$ is in domain of $x$, then  $A_x$ sends $w$ to another basis vector, otherwise, to the zero vector.  Since $x$ is a partial bijection, applying $A_x$ repeatedly, we can either get the same vector or the zero vector; $A_x^n w =0$ means that $v\notin\dom x^n$. In the first case, the vector $w$ corresponds to a non-zero root of $\chi_x$ (some root of unity), and the vertex $v$ contributes to the ultimate rank. In the second case, the vector is a root vector for the zero eigenvalue, so it corresponds to a zero root of $A_x$, while the corresponding vector does not contribute to the ultimate rank. 
\end{proof}

 Denote $\rank_n(x)=|\dom(x)\cap B_n|$  and define the \textit{total rank} $$R_n'=\sum\limits_{x \in \pn{n}} \rank_n(x).$$
 \begin{remark}\label{remark:trank}
Clearly, for $x=(f,a)$, where $a\in \is{2}$, $f\colon \dom(a)\ra \pn{n-1}$, \begin{equation}\label{eq:recurrank}
\rank_n(x)=\sum_{y\in \dom(a)}\rank_{n-1}(f(y))
\end{equation}
if $\dom(a)\neq\varnothing$ and $\rank_n(x)=0$ otherwise.
 \end{remark} 

\begin{lemma}\label{lemma:sum-rank} 
Let $R'_n$ be the  total rank of the semigroup $\mathcal{P}_n$. Then $$R_n' = 4R_{n-1}'+ 4R_{n-1}' N_{n-1}.$$
\end{lemma}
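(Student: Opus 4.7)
The plan is to exploit the recursive description $x = (f,a) \in \pn{n}$ with $a\in \is{2}$ and $f\colon \dom(a) \to \pn{n-1}$, combined with the rank recursion in Remark~\ref{remark:trank}, and then reduce everything to sums over $\pn{n-1}$ that are either $R_{n-1}'$ or $N_{n-1}$.

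First I would split the sum defining $R_n'$ according to $|\dom(a)|$. The seven elements of $\is{2}$ decompose as: one element with empty domain (contributing $0$ to $R_n'$), four elements with $|\dom(a)|=1$, and two elements with $|\dom(a)|=2$. For each fixed $a$ with $|\dom(a)|=k$, the set of maps $f\colon\dom(a)\to\pn{n-1}$ has cardinality $N_{n-1}^k$, and by \eqref{eq:recurrank} one has $\rank_n(f,a)=\sum_{y\in\dom(a)}\rank_{n-1}(f(y))$.

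Next I would evaluate the inner sum for each case. For $|\dom(a)|=1$, writing $\dom(a)=\{y\}$, summing $\rank_{n-1}(f(y))$ over $f(y)\in\pn{n-1}$ gives exactly $R_{n-1}'$, so the four such $a$'s contribute $4R_{n-1}'$. For $|\dom(a)|=2$, writing $\dom(a)=\{y_1,y_2\}$, the sum
\[
\sum_{f(y_1),f(y_2)\in\pn{n-1}}\bigl(\rank_{n-1}(f(y_1))+\rank_{n-1}(f(y_2))\bigr)
\]
factors as $2\,R_{n-1}' \cdot N_{n-1}$ by separating the two summands and using that the unconstrained coordinate ranges over all $N_{n-1}$ elements. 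The two such $a$'s therefore contribute $4R_{n-1}' N_{n-1}$. Adding the two cases yields the claimed recursion.

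There is no real obstacle here beyond careful bookkeeping: the proof is a direct application of Remark~\ref{remark:trank} together with the counting of elements of $\is{2}$ by domain size and Fubini on the product sum. The only thing to be slightly careful about is handling the empty-domain element of $\is{2}$ separately (its contribution is zero, consistent with the convention $\rank_n(x)=0$ when $\dom(a)=\varnothing$), so that it does not accidentally enter the count.
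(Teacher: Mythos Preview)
Your proposal is correct and follows essentially the same approach as the paper: split the sum over $x=(f,a)$ according to $|\dom(a)|$, apply \eqref{eq:recurrank}, and use that the four $a$'s with $|\dom(a)|=1$ contribute $4R_{n-1}'$ while the two $a$'s with $|\dom(a)|=2$ contribute $2\cdot 2R_{n-1}'N_{n-1}$ via a Fubini/symmetry argument. Your explicit treatment of the empty-domain element is a minor clarifying addition, but otherwise the argument is identical to the paper's.
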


\begin{proof} Thanks to \eqref{eq:recurrank}
\begin{gather*}
R_n' = \sum_{\substack{x =(f,a)\in \mathcal{P}_n}} \rank_n(x)=\sum_{\substack{x =(f,a)\in \mathcal{P}_n}}\sum_{\substack{y \in \dom(a)}} \rank_{n-1}(f(y))\\= \sum_{\substack{a\in \is{2}\\\abs{\dom(a)=1}}}\sum_{\substack{f_1\in \pn{n-1}}}\rank_{n-1}(f_1)\\+\sum_{\substack{a\in \is{2}\\\abs{\dom(a)=2}}}\sum_{\substack{f_1, f_2\in \pn{n-1}}}\left(\rank_{n-1}(f_1)+\rank_{n-1}(f_2)\right)\\=4R_{n-1}'+2\sum_{\substack{f_1, f_2 \in \pn{n-1}}}\left(\rank_{n-1}(f_1)+\rank_{n-1}(f_2)\right).
\end{gather*}

By symmetry, \begin{gather*}
\sum_{\substack{f_1, f_2 \in \pn{n}}}\rank_{n-1}(f_1)=\sum_{\substack{f_1\in \pn{n-1}}}\sum_{\substack{f_2\in \pn{n-1}}}\rank(f_1)\\=\sum_{\substack{f_1, f_2 \in \pn{n}}\in \pn{n-1}}N_{n-1}\rank(f_1)=N_{n-1}R'_{n-1}.
\end{gather*}
Hence, $R_n'=4R_{n-1}'+4R_{n-1}'N_{n-1}$.
\end{proof}

\begin{lemma}\label{lemma:sum-rank-again} Let $R'_n$ be the total rank of the semigroup $\mathcal{P}_n$. Then $$R_n' = 2^{n-1}(1+N_n)=2^{2^{n}+n-2}.$$\end{lemma}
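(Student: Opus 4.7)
The plan is to prove the identity by induction on $n$, using the recursion $R_n' = 4R_{n-1}'(1 + N_{n-1})$ from Lemma \ref{lemma:sum-rank} together with the closed form $N_n = 2^{2^{n+1}-1}-1$ from Proposition \ref{proposition:card}. The closed-form expression in the lemma is best proved via its factored version $R_n' = 2^{n-1}(1+N_n)$, with the second equality obtained at the end as a direct substitution of the formula for $N_n$.

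For the base case $n=1$, I would verify $R_1' = 8$ by summing ranks over the seven elements of $\is{2}$ listed explicitly in Section \ref{sec:basic}: the two total bijections contribute $2$ each, the four partial bijections with one-point domain contribute $1$ each, and the empty map contributes $0$, giving $R_1' = 8 = 2^0 \cdot (1+N_1)$.

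For the inductive step, assume $R_{n-1}' = 2^{n-2}(1+N_{n-1})$. Substituting into the recursion from Lemma \ref{lemma:sum-rank} gives
\begin{equation*}
R_n' = 4 \cdot 2^{n-2}(1+N_{n-1}) \cdot (1+N_{n-1}) = 2^n (1+N_{n-1})^2.
\end{equation*}
To match the desired form $2^{n-1}(1+N_n)$, it then suffices to verify the algebraic identity $2(1+N_{n-1})^2 = 1 + N_n$. Since $1+N_{n-1} = 2^{2^n-1}$, we have $(1+N_{n-1})^2 = 2^{2^{n+1}-2}$, so $2(1+N_{n-1})^2 = 2^{2^{n+1}-1} = 1+N_n$, as required.

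There is no real obstacle here; the only care needed is to keep track of the nested exponents and the off-by-one in the recursion (the recursion produces $(1+N_{n-1})^2$ from one factor $4 = 2^2$ combined with the inductive hypothesis, and the square-to-linear collapse is exactly what the doubling of the exponent of two in $N$ provides). Finally, I would record the closed form $2^{n-1}(1+N_n) = 2^{n-1} \cdot 2^{2^{n+1}-1} = 2^{2^{n+1}+n-2}$ as the explicit numerical value of $R_n'$.
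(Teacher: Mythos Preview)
Your argument is correct and matches the paper's approach: induction on $n$ using the recursion $R_n' = 4R_{n-1}'(1+N_{n-1})$ from Lemma~\ref{lemma:sum-rank} together with the closed form for $N_n$ from Proposition~\ref{proposition:card}. One remark: your final numerical value $2^{2^{n+1}+n-2}$ is in fact correct and disagrees with the exponent $2^{n}+n-2$ printed in the lemma; that is a typo in the paper (its own proof carries the matching slip $1+N_{n-1}=2^{2^{n-1}-1}$ instead of $2^{2^{n}-1}$), but only the factored form $R_n'=2^{n-1}(1+N_n)$ is used downstream, and you have established that correctly.
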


\begin{proof} We proceed by induction. A direct calculation gives $$R_1'= 8=1+N_1.$$ 

Assuming that 
$$
R_{n-1}' = 2^{2^{n-1}+n-3}, 
$$
we have, thanks to Lemma~\ref{lemma:sum-rank} and Proposition~\ref{proposition:card},  
$$ 
R_n' = 4R'_{n-1}(1+N_{n-1}) = 4\cdot 2^{2^{n-1}+n-3}\cdot 2^{2^{n-1}-1} = 2^{2^{n}+n-2},
$$
as required.
\end{proof}

\begin{lemma}\label{lemma:ev-rank}
Let $R_n$ be the total ultimate rank of the semigroup $\mathcal{P}_n$. Then
$$R_n \leq 3 R_{n-1}+3R_{n-1}N_{n-1}.$$
\end{lemma}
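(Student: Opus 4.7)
The plan is to partition $R_n=\sum_{x\in\pn{n}} \rk_n(x)$ according to the outer coordinate $a\in\is{2}$ of $x=(f,a)$, express $\rk_n((f,a))$ case-by-case in terms of the inner data, and then bound the contribution coming from the transposition.

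First I carry out the case analysis on $a$. The crucial qualitative remark is that if $v\in\dom(a)$ but $a(v)\notin\dom(a)$, the whole subtree below $v$ is lost to $\rk_n$ because $x^2$ is already undefined there; the same is trivially true if $v\notin\dom(a)$. Consequently, among the four $a$'s with $|\dom(a)|\leq1$ only the two ``fixing'' ones contribute, each yielding $\rk_{n-1}(f(v))$. For $a=\id$ the two subtrees decouple and $\rk_n(x)=\rk_{n-1}(f(v_1^1))+\rk_{n-1}(f(v_2^1))$. For the transposition $a$, iterating $x$ on a leaf $l_j$ lying below $v_1^1$ produces the sequence $l_j\to r_{g_1(j)}\to l_{(g_1g_2)(j)}\to\cdots$ with $g_i:=f(v_i^1)$, so $l_j\in S_n(x)$ iff $j\in S_{n-1}(g_1g_2)$; a short check that $g_1$ restricts to a bijection from $S_{n-1}(g_1g_2)$ onto $S_{n-1}(g_2g_1)$ (with $g_2$ as inverse) then gives $\rk_n(x)=2\rk_{n-1}(g_1g_2)$. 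Summing over $f$ and over $a$,
\begin{equation*}
R_n=2R_{n-1}+2N_{n-1}R_{n-1}+2\!\sum_{g_1,g_2\in\pn{n-1}}\!\rk_{n-1}(g_1g_2).
\end{equation*}

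With this decomposition in hand, the stated inequality reduces to the estimate $\sum_{g_1,g_2}\rk_{n-1}(g_1g_2)\leq \tfrac12 R_{n-1}(1+N_{n-1})$. Two elementary ingredients point the way: the inclusion $S_{n-1}(g_1g_2)\subseteq \dom(g_1)$ and the bijection $g_1\colon S_{n-1}(g_1g_2)\to S_{n-1}(g_2g_1)\subseteq \dom(g_2)$ together give the pointwise bound
\begin{equation*}
\rk_{n-1}(g_1g_2)\leq \min\bigl(\rank_{n-1}(g_1),\rank_{n-1}(g_2)\bigr),
\end{equation*}
which together with the explicit value $R'_{n-1}=2^{n-2}(1+N_{n-1})$ from Lemma~\ref{lemma:sum-rank-again} controls the sum.

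The hard part is that this direct pointwise bound gives only $\sum\rk_{n-1}(g_1g_2)\leq N_{n-1}R'_{n-1}$, an $R'$-type quantity that is too generous to recover the coefficient $3$ on the right-hand side. To reach an $R$-type bound one has to exploit that the partial bijection built from $g_1$ and $g_2$ on two disjoint copies of $B_{n-1}$ is a disjoint union of alternating cycles and paths, that only cycle-vertices count toward $\rk_{n-1}(g_1g_2)$, and that the symmetry $\rk_{n-1}(g_1g_2)=\rk_{n-1}(g_2g_1)$ double-counts each cycle-vertex when the sides are swapped. A careful cycle-by-cycle double counting over these vertices then supplies the missing factor of $1/2$, and combining the resulting estimate with the case analysis above and rearranging yields $R_n\leq 3R_{n-1}+3R_{n-1}N_{n-1}$.
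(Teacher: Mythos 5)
Your decomposition $R_n=2R_{n-1}+2N_{n-1}R_{n-1}+2\sum_{g_1,g_2\in\pn{n-1}}\rk_{n-1}(g_1g_2)$ is exactly the paper's, and you correctly identify the remaining target $\sum_{g_1,g_2}\rk_{n-1}(g_1g_2)\leq\tfrac12(1+N_{n-1})R_{n-1}$. The gap is in how you propose to reach it. Your pointwise bound $\rk_{n-1}(g_1g_2)\leq\min\bigl(\rank_{n-1}(g_1),\rank_{n-1}(g_2)\bigr)$ yields $N_{n-1}R'_{n-1}$, while the target equals $R'_{n-1}R_{n-1}/2^{n-1}$; the deficit is the factor $2^{n-1}N_{n-1}/R_{n-1}=1/p_{n-1}$, which is unbounded in $n$ (the whole point of the paper is that $p_n\to0$), not a ``missing factor of $1/2$'' recoverable by a double count. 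The deeper problem is that the cycle-vertex picture does not by itself connect to $R_{n-1}$: a vertex can lie in $S_{n-1}(g_1g_2)$ without lying in $S_{n-1}(g_1)$ or $S_{n-1}(g_2)$ (already for partial bijections of $\set{1,2}$: $g_1\colon 2\mapsto 1$, $g_2\colon 1\mapsto 2$ give $2\in S(g_1g_2)$ while $S(g_1)=S(g_2)=\varnothing$). So counting cycle vertices of the alternating partial bijection produces quantities controlled by domains, i.e.\ by $\rank$ and hence $R'_{n-1}$, and the symmetry $\rk_{n-1}(g_1g_2)=\rk_{n-1}(g_2g_1)$ merely restates the invariance of the double sum under swapping the summation indices --- it supplies no extra factor.

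The ingredient you are missing is the paper's reduction of one factor to an idempotent. Write $g_1=e\sigma$ with $e=\id_{\dom(g_1)}$ and $\sigma$ a total automorphism; since $g_2\mapsto\sigma g_2$ is a bijection of $\pn{n-1}$, one gets $\sum_{g_2}\rk_{n-1}(g_1g_2)=\sum_{g_2}\rk_{n-1}(eg_2)$. Because $eg_2$ is a restriction of $g_2$, it follows that $S_{n-1}(eg_2)\subseteq\dom(g_1)\cap S_{n-1}(g_2)$, whence $\rk_{n-1}(eg_2)\leq\abs{\dom(g_1)\cap S_{n-1}(g_2)}$. Expanding this as a sum of products of indicators over the $2^{n-1}$ vertices and using the symmetry in the vertex index factorizes the double sum into $\frac{R'_{n-1}}{2^{n-1}}\cdot R_{n-1}=\tfrac12(1+N_{n-1})R_{n-1}$. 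It is precisely this averaging over the permutation part of $g_1$ that lets the \emph{ultimate} rank of the other factor --- and hence $R_{n-1}$ rather than $R'_{n-1}$ --- enter the estimate; without it, or some equivalent device, your sketch cannot close.
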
 
\begin{proof} 
Represent $R_n$ as a sum
$$R_n=\sum_{\substack{x=(f,a)\in\pn{n}}}\rk_n(x)=\sum_{\substack{x=(f,a)\in\pn{n}\\\abs{\dom(a)}=1}}\rk_n(x)+\sum_{\substack{x=(f,a)\in\pn{n}\\\abs{\dom(a)}=2}}\rk_n(x).$$

If  $\rank(a)= 1$, then we will be interested only in those $a$ for which $a=(1)$ and $a = (2)$, since otherwise the ultimate rank of $x$ is 0. Therefore,  
\begin{gather*}
\sum\limits_{\substack{x=(f,a) \in \pn{n}\\|\dom(a)|=1}} \rk_n(x)=\sum_{a\in \set{(1),(2)}}\sum_{\substack{f_1}\in\pn{n-1}}\rk_{n-1}(f_1)\\=2 \sum\limits_{f_1 \in \pn{n-1}}\rk_{n-1}(f_1)= 2R_{n-1}.
\end{gather*}

If  $\rank(a)= 2$, then
\begin{gather*}\sum_{\substack{{x=(f,a) \in \pn{n}}\\|\dom(a)|=2}}^{}  \rk_n(x) = \sum_{\substack{{x=(f,a) \in \pn{n}}\\a=(1)(2)}} \rk_n(x) + \sum_{\substack{{x=(f,a) \in \pn{n}}\\a=(12)}} \rk_n(x) =: S_1 + S_2.\end{gather*}

Clearly, if $x=(f,a)$ with $a=(1)(2)$, then $\rk_n(x)=\rk_{n-1}(f(1))+\rk_{n-1}(f(2))$, whence
\begin{gather*}
S_1 = \sum_{\substack{f_1, f_2\in \pn{n-1}}}\left(\rk_{n-1}(f_1)+\rk_{n-1}(f_2)\right)\\=2\sum_{\substack{f_1, f_2}\in \pn{n-1}}\rk_{n-1}(f_1)=2 R_{n-1}N_{n-1}.
\end{gather*}

Further, if $x=(f,a)$ with $a=(12)$, then $\rk_{n}(x)=2\rk_{n-1}(f(1)(f(2))$. So, 
\begin{gather*}
S_2=2\sum_{\substack{f_1, f_2\in \pn{n-1}}}\rk_{n-1}(f_1 f_2).
\end{gather*}

Note that every element  $x \in \pn{n}$ can be represented as a product $x = e \sigma$, where $e$ is an idempotent on $\dom(x)$ and $\sigma$ is a permutation on the set of the tree vertices.
Then $$\sum\limits_{x_2 \in \pn{n-1}}\rk_{n-1}(x_1 x_2) =\sum\limits_{x_2 \in \pn{n-1}}\rk_{n-1}(e \sigma x_2)= \sum\limits_{x_2 \in \pn{n-1}}\rk_{n-1}(e x_2).$$
The last equality is true since transformation $x\mapsto \sigma x$ is bijective on $\pn{n-1}$.

It follows from above that
\begin{gather*}
S_2=2\sum_{\substack{f_1, f_2\in \pn{n-1}}}\rk_{n-1}(f_1 f_2) = 2  \sum\limits_{f_1 \in \pn{n-1} }\sum\limits_{f_2 \in \pn{n-1}} \rk_{n-1}(\id_{\dom(f_1)} f_2)\\
\leq 2  \sum\limits_{f_1 \in \pn{n-1} }\sum\limits_{f_2 \in \pn{n-1}}\abs{\dom(f_1) \cap S_{n-1}(f_2)} \\
= 2  \sum\limits_{f_1 \in \pn{n-1} }^{}    \sum\limits_{f_2 \in \pn{n-1}}\sum\limits_{j=1}^{2^{n-1}} \mathbf{1}_{\{v_j^{n-1} \in \dom(f_1)\}}\cdot\mathbf{1}_{\{v_j^{n-1} \in S_{n-1}(f_2)\}}\\
=  2  \sum\limits_{j=1}^{2^{n-1}} \sum\limits_{f_1 \in \pn{n-1} }^{} \mathbf{1}_{\{v_j^{n-1}\in \dom(f_1)\}} \cdot  \sum\limits_{f_2 \in \pn{n-1}} \mathbf{1}_{\{v_j^{n-1}\in S_{n-1}(f_2)\}}.
\end{gather*} 
Thanks to symmetry, for each $j$
$$\sum\limits_{x \in \pn{n}}\mathbf{1}_{\{v_j^{n-1} \in \dom(x)\}}=\sum\limits_{x \in \pn{n}} \mathbf{1}_{\{v_1^{n-1} \in \dom(x)\}}.$$
Therefore, $$\frac{1}{2^n} \sum\limits_{x \in \pn{n}} \sum\limits_{k=1}^{2^{n-1}} \mathbf{1}_{\{v_k^{n-1} \in \dom(x)\}} = \frac{1}{2^{n-1}} \abs{\dom(x)}.$$ 
Thus, we can write 
\begin{gather*}
S_2=2 \sum\limits_{j=1}^{2^{n-1}} \sum\limits_{f_1 \in \pn{n-1}} \frac{1}{2^{n-1}} \abs{\dom(f_1)} \mathbf{1}_{\{v_j^{n-1} \in S_{n-1}(f_2)\}}\\ = \frac{1}{2^{n-1}} \sum\limits_{f_1 \in \pn{n}}\abs{\dom(f_1)} \sum\limits_{j=1}^{2^{n}} \mathbf{1}_{\{v_j^{n-1} \in S_{n-1}(f_2)\}}\\
= 2 \cdot  \frac{1}{2^{n-1}} \sum\limits_{f_1 \in \pn{n-1}} \abs{\dom(f_1)}\cdot \sum\limits_{f_2 \in \pn{n-1}} \abs{S_{n-1}(f_2)}.\end{gather*}

Using that $ \abs{S_{n-1}(f_2)}=\rk_{n-1}(f_2)$, $ \abs{\dom(f_1)} = \rank(f_1)$,  and applying Lemma~\ref{lemma:sum-rank-again}, we  get
\begin{gather*}
\frac{2}{2^{n-1}}\, R_{n-1} R_{n-1}' = \frac{2 R_{n-1}\cdot (1 + N_{n-1}) 2^{n-2}}{2^{n-1}}\\= \frac{2R_{n-1}(1 + N_{n-1}) }{2}= (1 + N_{n-1}) R_{n-1}.
\end{gather*}

Therefore, $R_n \leq 2 R_{n-1} + 2 R_{n-1} N_{n-1} + (1 + N_{n-1}) R_{n-1}= 3 R_{n-1}+ 3R_{n-1}N_{n-1}.$ \end{proof}


\begin{lemma}\label{lemma:estpn}
For $n \in \mathbb{N}$ denote $p_n=\dfrac{R_n}{2^n N_n}$. Then $$p_n\leq \dfrac{\ 3\ }{\ 4\ } p_{n-1}, \quad n \geq 2.$$
\end{lemma}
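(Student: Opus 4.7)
The plan is to combine the recursive bound from Lemma~\ref{lemma:ev-rank} with the closed form $N_n=2^{2^{n+1}-1}-1$ supplied by Proposition~\ref{proposition:card}, and then do a single algebraic comparison.

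First I would factor the Lemma~\ref{lemma:ev-rank} estimate as $R_n\le 3R_{n-1}(1+N_{n-1})$ and divide by $2^n N_n$ to obtain
\begin{equation*}
p_n=\frac{R_n}{2^nN_n}\le \frac{3R_{n-1}(1+N_{n-1})}{2^nN_n}= 3p_{n-1}\cdot\frac{N_{n-1}(1+N_{n-1})}{2N_n}.
\end{equation*}
Thus the required inequality $p_n\le\tfrac34 p_{n-1}$ reduces to the purely numerical statement
\begin{equation*}
2N_{n-1}(1+N_{n-1})\le N_n.
\end{equation*}

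Next I would verify this inequality using Proposition~\ref{proposition:card}. Since $N_{n-1}=2^{2^n-1}-1$, we have $1+N_{n-1}=2^{2^n-1}$, so
\begin{equation*}
2N_{n-1}(1+N_{n-1})=2\bigl(2^{2^n-1}-1\bigr)\cdot 2^{2^n-1}=2^{2^{n+1}-1}-2^{2^n},
\end{equation*}
whereas $N_n=2^{2^{n+1}-1}-1$. The difference $N_n-2N_{n-1}(1+N_{n-1})=2^{2^n}-1$ is positive for every $n\ge 1$, so the inequality holds with room to spare, which yields the claim for all $n\ge 2$.

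There is really no obstacle here: the main bound has already been done in Lemma~\ref{lemma:ev-rank}, and Proposition~\ref{proposition:card} gives an exact formula for $N_n$ that happens to satisfy $N_n\approx (1+N_{n-1})^2$, so the ratio $(1+N_{n-1})/N_n$ buys a factor roughly $1/N_{n-1}$, which is far smaller than the $1/4$ we need. The only thing to watch is keeping the $2^n$ factor from $p_n$ versus $2^{n-1}$ from $p_{n-1}$ straight when rearranging; that accounts for the extra factor of $2$ in the reduced inequality above.
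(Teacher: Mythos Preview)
Your proof is correct and follows essentially the same approach as the paper: both combine the bound $R_n\le 3R_{n-1}(1+N_{n-1})$ from Lemma~\ref{lemma:ev-rank} with the explicit formula $N_n=2^{2^{n+1}-1}-1$ from Proposition~\ref{proposition:card} and finish with a short algebraic comparison. The only cosmetic difference is that you first isolate the numerical inequality $2N_{n-1}(1+N_{n-1})\le N_n$ and verify it directly, whereas the paper carries the estimate through a chain of manipulations.
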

\begin{proof}
Using Lemma~\ref{lemma:ev-rank}, we get
\begin{gather*}
p_n = \frac{R_n}{2^n N_n} \leq 
\frac{3 R_{n-1} + 3 R_{n-1} N_{n-1}}{2^n N_n} = \frac{3 R_{n-1}(1+ N_{n-1})}{2^n N_n}\\
= \frac{3 R_{n-1}\cdot 2^{2^{n}-1}}{2^n (2^{2^{n+1}-1}-1)}=  \frac{3}{2} \cdot \frac{R_{n-1}}{2^{n-1} (2^{2^{n}}-2^{1-2^n})} 
 \leq \frac{3}{2} \cdot \frac{R_{n-1}}{2^{n-1} (2^{2^{n}}-2)}\\ = \frac{3}{2} \cdot \frac{R_{n-1}}{2^{n} (2^{2^{n}-1}-1)} 
 = \frac{3}{4} \cdot \frac{R_{n-1}}{ 2^{n-1} N_{n-1}}=   \frac{3}{4} \cdot p_{n-1}.\qedhere\end{gather*} \end{proof}
\begin{proof}[Proof of Theorem \ref{theorem:main}]
Note that $\int_D f(z)\Xi_n (dz)=\frac{1}{2^n}\sum_{k=1}^{2^n}f(\lambda_k)$, where $\lambda_1, \ldots, \lambda_{2^n}$ are the roots of characteristic polynomial $\chi_x(\lambda)$.
	
Then, thanks to Lemma~\ref{lemma:ult-rank} \begin{gather*}
\abs{\int_D f(z)\Xi_n (dz) - f(0)}=\abs{\int_D\left(f(z)-f(0)\right)\Xi(dz)}\\\leq\frac{1}{2^n}\sum_{k:\lambda_k\neq 0} \abs{\left(f(k)-f(0)\right)}\leq
2\max_{D}\abs{f}\cdot\frac{\abs{k:\lambda_k\neq 0}}{2^n}=2  \max_{D}\abs{f} \frac{\rk_n(x)}{2^n}
\end{gather*}

Therefore, 
\begin{gather*}
\mathbb{E}\abs{\int_D f(z)\Xi(dz)-f(0)}\leq 2\max\abs{f}\cdot \frac{\sum_{z\in \pn{n}}\rk_n(x)}{2^n N_n}=2\max\abs{f}\cdot p_n,
\end{gather*}
where $p_n$ is defined in Lemma~\ref{lemma:estpn}.
	By Lemma~\ref{lemma:estpn}, $p_n \leq \frac{3}{4} \cdot p_{n-1}\leq \ldots \leq (\frac{3}{4})^{n-1}p_0$, whence $p_n\to 0$, $n\to \infty$.

Consequently, 
$\mathbb{E}\abs{\int_D f(z)\Xi(dz)-f(0)}\to 0$, $n \to \infty$, whence the statement follows.
\end{proof}
\begin{remark}
We can see from the proof that the rate of convergence in \eqref{eq:main} is in some sense exponential.
\end{remark}

\end{document}